\documentclass[11pt]{article}
\usepackage{amssymb}
\usepackage{amsmath}
\usepackage{amsthm}
\usepackage{latexsym}
\usepackage[english]{babel}
\usepackage[overload]{empheq}
\usepackage{cleveref}

\usepackage{amsmath,amssymb,amscd,enumerate,eucal,bm}
\usepackage{graphicx}
\usepackage{amsmath}
\usepackage{amsfonts}
\usepackage{amssymb}
\usepackage{amsmath}
\usepackage{amsthm}
\usepackage{latexsym}
\usepackage{graphicx}
\usepackage{cases}
\usepackage{float}
\usepackage{caption}
\usepackage{subfigure}
\usepackage{fullpage}
\usepackage{tikz}
%\usetikzlibrary{matrix,arrows}
\usetikzlibrary{matrix,arrows,decorations.pathmorphing}

%\usepackage{hyperref}

%``lower''

%\linespread{1.5}

\topmargin -0.3 cm
\oddsidemargin 0.6 cm
\textwidth 14.5 cm
\textheight 21.2 cm

\numberwithin{equation}{section}

\newcommand{\R}{\mathbb{R}}

\newcommand{\e}{\varepsilon}

\newcommand{\Sph}{\mathbb{S}^3}

\newcommand{\la}{\langle}
\newcommand{\ra}{\rangle}
\newcommand{\vol}{\rm vol}

\newcommand{\dvol}{d{\rm vol}}

\newtheorem{theorem}{Theorem}[section]

\newtheorem{lemma}[theorem]{Lemma}

\newtheorem{proposition}[theorem]{Proposition}
\newtheorem{remark}[theorem]{Remark}

\begin{document}

\title{Existence results for the conformal Dirac-Einstein system}

\author{Chiara Guidi$^{(1)}$ \& Ali Maalaoui$^{(2)}$ \& Vittorio Martino$^{(3)}$}
\addtocounter{footnote}{1}
\footnotetext{Dipartimento di Matematica, Universit\`a di Bologna, piazza di Porta S.Donato 5, 40126 Bologna, Italy. E-mail address:
{\tt{chiara.guidi12@unibo.it}}}
\addtocounter{footnote}{1}
\footnotetext{Department of mathematics and natural sciences, American University of Ras Al Khaimah, PO Box 10021, Ras Al Khaimah, UAE. E-mail address:
{\tt{ali.maalaoui@aurak.ae}}}
\addtocounter{footnote}{1}
\footnotetext{Dipartimento di Matematica, Universit\`a di Bologna, piazza di Porta S.Donato 5, 40126 Bologna, Italy. E-mail address:
{\tt{vittorio.martino3@unibo.it}}}

\date{}
\maketitle

\vspace{5mm}

{\noindent\bf Abstract} {\small In this paper we consider the coupled system given by the first variation of the conformal Dirac-Einstein functional. We will show existence of solutions by means of perturbation methods.}

\vspace{5mm}

\noindent
{\small Keywords: Conformally invariant operators, perturbation methods. }

\vspace{5mm}

\noindent
{\small 2010 MSC. Primary: 58J05, 58E15.  Secondary: 53A30, 58Z05}

\vspace{5mm}

%%%%%%%%%%%%%%%%%%%%%%%%%%%%%%%%%%%%%%%%%%%%%%%%%%%%%%%%%%%%%%%%%%%%%%%%%%%%%%%%%%%%%%%%%%%%%%%%%%%%%%%%%%%%%%%%%%%%%%%%%%%%%%%%%%%%
%%%%%%%%%%%%%%%%%%%%%%%%%%%%%%%%%%%%%%%%%%%%%%%%%%%%%%%%%%%%%%%%%%%%%%%%%%%%%%%%%%%%%%%%%%%%%%%%%%%%%%%%%%%%%%%%%%%%%%%%%%%%%%%%%%%%
%%%%%%%%%%%%%%%%%%%%%%%%%%%%%%%%%%%%%%%%%%%%%%%%%%%%%%%%%%%%%%%%%%%%%%%%%%%%%%%%%%%%%%%%%%%%%%%%%%%%%%%%%%%%%%%%%%%%%%%%%%%%%%%%%%%%
%%%%%%%%%%%%%%%%%%%%%%%%%%%%%%%%%%%%%%%%%%%%%%%%%%%%%%%%%%%%%%%%%%%%%%%%%%%%%%%%%%%%%%%%%%%%%%%%%%%%%%%%%%%%%%%%%%%%%%%%%%%%%%%%%%%%

\section{Introduction}

\noindent
Let $(M,g,\Sigma M)$ be a closed (compact, without boundary) three dimensional Riemannian Spin manifold where $\Sigma M$ is its spin bundle. We denote by $L_g$ the conformal Laplacian of $g$ and by $D_g $ the Dirac operator. We consider the energy functional
\begin{equation}\label{eq: energy functional}
E_M(v,\psi)=\frac{1}{2}\left(\int_{M}v L_{g}v+\langle D_{g} \psi,\psi \rangle -|v|^{2}|\psi|^{2} \dvol_{g}\right)
\end{equation}
and we take its first variation on the related Sobolev space $H^1(M) \times H^{\frac{1}{2}}(\Sigma M)$; therefore its critical points satisfy the coupled system
\begin{equation} \label{el}
\begin{cases}
L_{g} v=|\psi|^{2}u\\
\\
D_{g}\psi=|v|^{2}\psi
\end{cases} \text{on } M.
\end{equation}

\noindent
This functional arises as the conformal version in the description of a super-symmetric model consisting of coupling gravity with fermionic interaction and it generalizes the classical Hilbert-Einstein energy functional, see for instance \cite{Belg, Fin, Kim}.\\
Indeed, the total energy functional consists of the Hilbert-Einstein energy which is the total curvature, coupled with a fermionic action. Now, since the energy of the system is invariant under the group of diffeomorphisms of $M$, when one restricts it to a fixed conformal class of a given Riemannian metric $g$, the functional $E_M$ shows up.\\
In particular, due to the conformal invariance, the Palais-Smale compactness condition is violated by this functional and in addition, due to the presence of the Dirac operator, it is strongly indefinite.\\
Regarding the first issue, in \cite{MMdiraceinstein} the authors studied the lack of compactness and gave a precise description of the bubbling phenomena, characterizing the behaviour of the Palais-Smale sequences, in the spirit of classical works \cite{stru84, sauh81, wen80, lio1-85, lio2-85, brecor85, bahcor88}. For the strongly indefinite difficulty, in \cite{M,MV,MV1} general functionals with these features are studied by using methods based on a homological approach. Notice that so far, one cannot apply these homological approaches because of the violation of compactness stated above.

\noindent
In this paper, we are concerned with the existence of solutions to the coupled system, by using a perturbation approach, starting from the sphere $\Sph$ equipped with its standard metric $g_{\Sph}$.\\ %$\Sph=\{x\in\R^4:\; |x|^2=1\}$,
Therefore, let $K$ be a function of the form $K=1+\e k$, where $k$ is a function with suitable assumptions to be determined later; we consider the functional
\begin{equation}
\mathcal{E}(v,\psi)=\frac{1}{2}\left(\int_{\Sph}v L_{g_{\Sph}}v+\langle D_{g_{\Sph}} \psi,\psi \rangle -K|v|^{2}|\psi|^{2} d\vol_{g_{\Sph}}\right)
\end{equation}
and we will focus on the existence of solutions to the following coupled system:
\begin{equation}\label{eq: pb on S}
\begin{cases}
L_{g_{\Sph}}v=K|\psi|^2v\\
\\
D_{g_{\Sph}}\psi=Kv^2\psi\quad
\end{cases}\text{on }\Sph
\end{equation}
Notice that these solutions converge to the standard bubbles when the parameter $\varepsilon$ tends to zero. This is expected from the description of the Palais-Smale sequences of the functional $E_{M}$, but it remains open whether all the solutions on the sphere with positive scalar component are in fact standard ones. 

\noindent
Let us denote by $\pi:\Sph\setminus\{sp\}\to\R^3$ the stereographic projection, where $sp$ is the south pole. Our main result is the following
\begin{theorem}\label{thm: main}
Let $k\in C^2(\Sph)$ be a Morse function on $\Sph$ such that the south pole is not a critical point. Let us set $h=k\circ \pi^{-1}$ and suppose that
$$(i) \quad \Delta h(\xi)\neq  0,\;\forall\; \xi\in\textnormal{crit}[h]\; ,$$
$$(ii) \qquad \sum_{\substack{\xi\in\textnormal{crit}[h]\\ \Delta h(\xi)<0}}(-1)^{m(h,\xi)}\neq -1,$$
where $\Delta$ is the standard Laplacian operator on $\R^3$, $\textnormal{crit}[h]$ denotes the set of critical points of $h$ and $m(h,\xi)$ is the morse index of $h$ at a critical point $\xi$.\\
Then, there exists $\e_0>0$ such that for $K=1+\e k$ and $|\e|<\e_0$, the system \eqref{eq: pb on S} has a solution.
\end{theorem}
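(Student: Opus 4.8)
I plan to use a finite-dimensional Lyapunov--Schmidt reduction around the manifold of unperturbed solutions, followed by a Morse-theoretic count on a compactification of that manifold. Write $H=H^1(\Sph)\times H^{\frac12}(\Sigma\Sph)$ and let $\mathcal{E}_0$ denote the functional with $K\equiv1$, so that $\mathcal{E}=\mathcal{E}_0-\tfrac{\e}{2}\int_{\Sph}k\,|v|^2|\psi|^2\,\dvol_{g_{\Sph}}$. The critical points of $\mathcal{E}_0$ form the manifold $Z$ of standard Dirac--Einstein bubbles; through the stereographic projection these are parametrized by a concentration point $\xi\in\R^3$ and a scale $\mu\in(0,\infty)$, so $Z\cong(0,\infty)\times\R^3$, and by conformal invariance $\mathcal{E}_0$ is constant on $Z$, equal to some $c_0$. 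The strong indefiniteness of $D_{g_{\Sph}}$ does not obstruct the reduction; it only forbids a direct min--max treatment. Assuming the key analytic input that $Z$ is nondegenerate, i.e. that at each $z\in Z$ one has $\ker\mathcal{E}_0''(z)=T_zZ$ with $\mathcal{E}_0''(z)$ invertible on $(T_zZ)^\perp$, the implicit function theorem applied to the auxiliary equation on $(T_zZ)^\perp$ (where invertibility, not definiteness, is what matters) produces for $|\e|$ small a smooth map $w(\e,z)\in(T_zZ)^\perp$ with $w(0,z)=0$, so that solutions of \eqref{eq: pb on S} near $Z$ correspond to critical points of the reduced functional $\Phi_\e(z)=\mathcal{E}(z+w(\e,z))$. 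A standard expansion then gives, in $C^1$ and uniformly on $Z$,
$$\Phi_\e(z)=c_0+\e\,G(z)+o(\e),\qquad G(z)=-\tfrac12\int_{\Sph}k\,|v_z|^2|\psi_z|^2\,\dvol_{g_{\Sph}},$$
and it suffices to find a critical point of $G$ that is stable under $C^1$-small perturbations.

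To analyze $G$ I would pass to stereographic coordinates, so that $G$ becomes $-\tfrac12\int_{\R^3}h\,|v|^2|\psi|^2\,dx$ evaluated on the Euclidean bubbles centered at $\xi$ with scale $\mu$. Since the density $|v|^2|\psi|^2$ is radial about $\xi$ with fixed mass, a Taylor expansion of $h$ yields, as $\mu\to0$,
$$G(\mu,\xi)=-\tfrac12\bigl(C_0\,h(\xi)+C_1\,\mu^2\,\Delta h(\xi)+o(\mu^2)\bigr),\qquad C_0,C_1>0.$$
Thus $G$ extends continuously to the compactification $\overline Z\cong\overline{B^4}$, whose boundary $\partial\overline Z\cong\Sph$ consists of the fully concentrated bubbles; the hypothesis that the south pole ($\mu\to\infty$) is not a critical point guarantees that the only boundary critical points sit at $\mu=0$ and are exactly the critical points $\xi$ of $h$. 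On $\partial\overline Z$ one has $G\propto-h$, while along the inward normal (scale) direction the expansion shows that $G$ has a local maximum in $\mu$ when $\Delta h(\xi)>0$ and a local minimum when $\Delta h(\xi)<0$.

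Condition $(i)$ makes every boundary critical point nondegenerate and splits them into the type with $\Delta h(\xi)>0$, where $G$ decreases inward ($\partial_+$ type), and the type with $\Delta h(\xi)<0$, where $G$ increases inward ($\partial_-$ type). Morse theory for $G$ on the manifold-with-boundary $\overline Z$ then reads
$$\chi(\overline Z)=\sum_{\textnormal{int. crit.}}(-1)^{\textnormal{ind}}+\sum_{\substack{\xi\in\textnormal{crit}[h]\\ \Delta h(\xi)<0}}(-1)^{\,\textnormal{ind}_\partial(\xi)},$$
where the boundary sum runs over the $\partial_-$ critical points and $\textnormal{ind}_\partial(\xi)$ is the index of $G|_{\partial\overline Z}\propto-h$ at $\xi$, namely $3-m(h,\xi)$. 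Using $\chi(\overline Z)=\chi(\overline{B^4})=1$ and $(-1)^{3-m(h,\xi)}=-(-1)^{m(h,\xi)}$, this rearranges to
$$\sum_{\textnormal{int. crit.}}(-1)^{\textnormal{ind}}=1+\sum_{\substack{\xi\in\textnormal{crit}[h]\\ \Delta h(\xi)<0}}(-1)^{m(h,\xi)}.$$
If $G$ had no interior critical point the left-hand side would vanish, forcing the sum in $(ii)$ to equal $-1$; since $(ii)$ excludes this, $G$ must have an interior critical point. Because this critical set carries nonzero total index and the normal derivative $\partial_\mu G\approx-C_1\mu\Delta h(\xi)$ stays away from zero on a collar of $\partial\overline Z$ (by $(i)$), the corresponding local degree is stable under the $C^1$-small perturbation $\Phi_\e-c_0-\e G$, so $\Phi_\e$ has an interior critical point for all sufficiently small $|\e|$, giving a solution of \eqref{eq: pb on S}.

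The main obstacle is the nondegeneracy of $Z$ and the attendant invertibility of $\mathcal{E}_0''(z)$ on $(T_zZ)^\perp$ in the presence of the indefinite Dirac term, together with establishing the $C^1$ expansion of $\Phi_\e$ uniformly up to $\partial\overline Z$ and the asymptotics of $G$ near concentration; once these analytic facts are in place, the topological count above is essentially forced and yields the conclusion.
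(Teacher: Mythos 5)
Your overall architecture (Ambrosetti--Badiale reduction around the unperturbed bubbles, the expansion $\partial_\lambda\Gamma\approx c_1\lambda\,\Delta h(\xi)$ near concentration, the treatment of the south pole as the point at infinity of the compactified parameter space, and a topological count that produces exactly $\sum_{\Delta h(\xi)<0}(-1)^{m(h,\xi)}+1$) matches the paper's, and your boundary Morse count is arithmetically equivalent to the paper's degree computation via Gottlieb's formula. However, there is a genuine gap at the very first step: the critical manifold of the unperturbed functional is \emph{not} $(0,\infty)\times\R^3$. Each Dirac--Einstein bubble carries an additional spinor parameter $a\in\Sigma\R^3$ with $|a|=1$, an $S^3$-worth of choices, so $Z\simeq(0,\infty)\times\R^3\times S^3$ is $7$-dimensional; correspondingly $\ker J_0''(z)$ is $7$-dimensional (this is the paper's Lemma \ref{lem: sol f''}, which reduces the kernel computation on the sphere to Isobe's Lemma 5.1 with $m=3$). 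With your $4$-parameter family, $J_0''(z)$ has a $3$-dimensional kernel transverse to $T_zZ$, so your ``key analytic input'' $\ker \mathcal{E}_0''(z)=T_zZ$ is false as stated, and the implicit-function-theorem step solving the auxiliary equation on $(T_zZ)^\perp$ fails: the operator is not invertible there, indefiniteness aside.

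Repairing this by taking the full $7$-dimensional $Z$ runs into precisely the obstruction the paper isolates in Remark \ref{rmk: degeneracy}: both $J_0$ and the perturbation $G$ are independent of $a$, so every critical point of the reduced functional on $Z$ is degenerate (each comes in an $S^3$-orbit) and the naive degree on $Z$ vanishes --- your Morse/degree count would then yield nothing. The missing idea is the paper's equivariance argument: one extends the $S^3$-action from $Z$ to all of $D^{\frac{1}{2}}(\Sigma\R^{3})$ via the Killing-spinor trivialization $\phi=(f_{1}+g_{1}x)\cdot a_{1}+(f_{2}+g_{2}x)\cdot a_{2}$, checks that $J_0$ and $G$ are invariant under this extended action, and concludes that the reduced functional $\Phi_\e$ descends to a $C^1$ function on the quotient $Z/S^3\simeq(0,\infty)\times\R^3$. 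Only on this quotient does your (otherwise correct) computation of the degree, nonzero under hypothesis $(ii)$, legitimately apply; the resulting critical point of the quotient functional then lifts to a critical orbit of $\Phi_\e$. In effect you worked on the quotient from the outset without justifying that the reduction lives there, and that justification --- the $7$-dimensional kernel identification plus the group action --- is the real analytic content of the paper's proof.
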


\noindent
The condition on the critical point at the south pole of the sphere is needed since we are going to use the standard stereographic projection $\pi$, however this condition can be always satisfied by making a unitary transformation which does not affect the generality of the result.

\noindent
The previous result is the analogous of several ones obtained with this kind of hypothesis of Bahri-Coron type on the function $k$: for instance, for the standard Riemannian case of prescribing the scalar curvature and its generalization to the $Q_\gamma$ curvature see \cite{ambgarper, changyang, chenzheng2014}; in the case of prescribing the Webster curvature in the CR setting and its fractional generalization see \cite{malchiodi-uguzzoni 2002} and \cite{chenwang2017}; for the spinorial Yamabe type equations involving the Dirac operator on the sphere see \cite{I}.

\noindent
The idea of the proof follows the abstract perturbation method introduced in \cite{AB}.\\
The difficulties in our situation come from the fact of having a system, from the strongly indefiniteness of one of the operator involved and finally from the degeneracy of the critical points of the finite dimensional reduction of the functional, which is due to the invariance with respect to one of the parameters of the problem (see Remark \ref{rmk: degeneracy}).

%%%%%%%%%%%%%%%%%%%%%%%%%%%%%%%%%%%%%%%%%%%%%%%%%%%%%%%%%%%%%%%%%%%%%%%%%%%%%%%%%%%%%%%%%%%%%%%%%%%%%%%%%%%%%%%%%%%%%%%%%%%%%%%%%%%%
%%%%%%%%%%%%%%%%%%%%%%%%%%%%%%%%%%%%%%%%%%%%%%%%%%%%%%%%%%%%%%%%%%%%%%%%%%%%%%%%%%%%%%%%%%%%%%%%%%%%%%%%%%%%%%%%%%%%%%%%%%%%%%%%%%%%
%%%%%%%%%%%%%%%%%%%%%%%%%%%%%%%%%%%%%%%%%%%%%%%%%%%%%%%%%%%%%%%%%%%%%%%%%%%%%%%%%%%%%%%%%%%%%%%%%%%%%%%%%%%%%%%%%%%%%%%%%%%%%%%%%%%%
%%%%%%%%%%%%%%%%%%%%%%%%%%%%%%%%%%%%%%%%%%%%%%%%%%%%%%%%%%%%%%%%%%%%%%%%%%%%%%%%%%%%%%%%%%%%%%%%%%%%%%%%%%%%%%%%%%%%%%%%%%%%%%%%%%%%

\section{Notations and definitions}

\noindent
Let $(M,g)$ be a closed (compact, without boundary) three dimensional Riemannian manifold.\\
We start to describe shortly the first operator appearing in the system. We denote by $L_g$ the conformal Laplacian acting on functions
$$L_g =-\Delta_g +\frac{1}{8}R_g.$$
Here $\Delta_g$ is the standard Laplace-Beltrami operator and $R_g$ is the scalar curvature. $L_g$ is a conformally invariant operator. More precisely, given a metric $\tilde{g}=f^2g$ in the conformal class of $g$, we have
$$L_{\tilde g}u=f^{-\frac{5}{2}}L_g(f^{\frac{1}{2}}u).$$
We recall that the usual Sobolev space on $M$, denoted by $H^1(M)$, continuously embeds in $L^p(M)$ for $1\leq p \leq 6$. Moreover, for $1\leq p<6$, the embedding is compact.\\
In particular, if we assume $M$ to be the sphere
$$\Sph=\{(x',x_4)\in\R^3\times\R:\; |x'|^2+x_4^2=1\}$$
equipped with its standard metric $g_{\Sph}$, then it is possible to identify $\Sph\setminus\{sp\}$, being $sp=(0,-1)$ the south pole, with $\R^3$, by means of the stereographic projection
\begin{align*}
\pi:\Sph\setminus\{sp\}&\to\R^3\\
(x,x_4)&\mapsto y=\frac{x}{1+x_4}.
\end{align*}
The standard metric $g_{\R^3}$ on $\R^3$ and the metric $\tilde{g}=(\pi^{-1})^*g_{\Sph}$ are conformal, more precisely $\tilde{g}=f^2 g_{\R^3}$, with $f=\frac{2}{1+|y|^2}$. Thus the standard conformal Laplacian on the sphere $L_{g_{\Sph}}$ and the one on $\R^3$, which we denote as usual $L_{g_{\R^3}}=-\Delta$, are related by the following identity
\begin{equation}\label{eq: relation LS LE}
L_{g_{\Sph}}v=\left[f^{-\frac{5}{2}}(-\Delta)\left(f^{\frac{1}{2}}v\circ\pi^{-1} \right)\right]\circ\pi, \quad v\in H^{1}(\Sph).
\end{equation}

\noindent
Now, let us describe the second operator involved. Let $\Sigma M$ be the canonical spinor bundle associated to $M$, whose sections are simply called spinors on $M$. This bundle is endowed with a natural Clifford multiplication
$$\text{Cliff}:C^\infty(TM\otimes\Sigma M)\longrightarrow C^\infty(\Sigma M), $$
a hermitian metric and a natural metric connection
$$\nabla^\Sigma:C^\infty(\Sigma M)\longrightarrow C^\infty(T^*M\otimes\Sigma M).$$
We denote by $D_g$ the Dirac operator acting on spinors
\begin{align*}
D_g:C^\infty &(\Sigma M)\longrightarrow C^\infty(\Sigma M)\\
D_g&=\text{Cliff} \circ \nabla^\Sigma
\end{align*}
where the composition $\text{Cliff} \circ \nabla^\Sigma$ is meaningful provided that we identify $T^*M \simeq TM$ by means of the metric $g$. We also have a conformal invariance that in our situation, $\tilde g= f^2g$, reads as follows: there exists an isomorphism of vector bundles $F:\Sigma(M,g)\to\Sigma(M,\tilde{g})$ such that
\begin{equation}\label{eq: relation Dg Dtildeg}
D_{\tilde g}\psi=F\left[f^{-2}D_g\left(fF^{-1}\psi\right)\right].
\end{equation}
The functional space that we are going to define is the Sobolev space $H^{\frac{1}{2}}(\Sigma M)$. First we recall that the Dirac operator $D_g$ on a compact manifold is essentially self-adjoint in $L^2(\Sigma M)$, has compact resolvent and there exists a complete $L^2$-orthonormal basis of eigenspinors $\{\psi_i\}_{i\in\mathbb{Z}}$ of the operator
$$D_g\psi_i=\lambda_i \psi_i ,$$
and the eigenvalues $\{\lambda_i\}_{i\in\mathbb{Z}}$ are unbounded, that is $|\lambda_i|\rightarrow\infty$, as $|i|\rightarrow\infty$.
In this way every function in $L^{2}(\Sigma M)$, it has a representation in this basis, namely:
$$\displaystyle \psi=\sum_{i\in \mathbb{Z}}a_{i}\psi_{i}, \qquad \psi\in L^{2}(\Sigma M).$$
We define the unbounded operator $|D_g|^{s}: L^{2}(\Sigma M)\rightarrow L^{2}(\Sigma M)$ by
$$|D_g|^{s}(\psi)=\sum_{i\in \mathbb{Z}} a_{i}|\lambda_{i}|^{s}\psi_{i}$$
and we denote by $H^s(\Sigma M)$ the domain of $|D_g|^{s}$, namely $\psi\in H^s(\Sigma M)$ if and only if
$$\sum_{i\in \mathbb{Z}} a_{i}^2|\lambda_{i}|^{2s}<+\infty .$$
$H^s(\Sigma M)$ coincides with the usual Sobolev space $W^{s,2}(\Sigma M)$ and for $s <0$, $H^s(\Sigma M)$ is defined as the dual of $H^{-s}(\Sigma M)$.\\
For $s >0$, we define the inner product, for $\psi,\phi \in H^s(\Sigma M)$
$$\langle \psi,\phi\rangle_{s}=\langle|D_g|^{s}\psi,|D_g|^{s}\phi\rangle_{L^{2}},$$
which induces an equivalent norm in $H^{s}(\Sigma M)$; we will take
$$\langle \psi,\psi\rangle:=\langle \psi,\psi\rangle_{\frac{1}{2}}=\|\psi\|^{2}$$
as our standard norm for the space $H^{\frac{1}{2}}(\Sigma M)$. In this case as well, the embedding $H^{s}(\Sigma M) \hookrightarrow L^p(\Sigma M)$ is continuous for $1\leq p \leq 3$ and it is compact if $1\leq p <3$.\\
Then, we decompose $H^{\frac{1}{2}}(\Sigma M)$ in a natural way. Let us consider the $L^2$-orthonormal basis of eigenspinors $\{\psi_i\}_{i\in\mathbb{Z}}$: we denote by $\psi_i^-$ the eigenspinors with negative eigenvalue, $\psi_i^+$ the eigenspinors with positive eigenvalue and $\psi_i^0$ the eigenspinors with zero eigenvalue; we also recall that the kernel of $D_g$ is finite dimensional. Now we set:
$$H^{\frac{1}{2},-}:=\overline{\text{span}\{\psi_i^-\}_{i\in\mathbb{Z}}},\quad
H^{\frac{1}{2},0}:=\text{span}\{\psi_i^0\}_{i\in\mathbb{Z}}, \quad
H^{\frac{1}{2},+}:=\overline{\text{span}\{\psi_i^+\}_{i\in\mathbb{Z}}},$$
where the closure is taken with respect to the $H^{\frac{1}{2}}$-topology. Therefore we have the orthogonal decomposition of $H^{\frac{1}{2}}(\Sigma M)$, which reads as:
$$H^{\frac{1}{2}}(\Sigma M)=H^{\frac{1}{2},-}\oplus H^{\frac{1}{2},0}\oplus H^{\frac{1}{2},+}.$$
Also, we let $P^{+}$ and $P^{-}$ be the projectors on $H^{\frac{1}{2},+}$ and $H^{\frac{1}{2},-}$ respectively.\\
Again, if we assume $M$ to be the sphere $\Sph$ and we identify $\Sph$ minus the south pole with $\R^3$ via stereographic projection, the conformal invariance of the Dirac operator reads as
\begin{equation}\label{eq: relation DS DE}
%D_{\R^3}\left(fF^{-1}(\phi)\right)=f^2 F(D_{\Sph}\phi), \quad \phi\in H^{\frac{1}{2}}(\Sigma\Sph)
D_{g_{\Sph}}\psi=F\left\{\left[f^{-2}D\left(fF^{-1}(\psi\circ \pi^{-1})\right)\right]\circ\pi\right\}, \quad\psi\in H^{\frac{1}{2}}(\Sigma\Sph)
\end{equation}
where $D_{g_{\Sph}}$ and $D_{g_{\R^3}}=D$ denote the Dirac operators on the standard sphere and $\R^3$ respectively; moreover $f=\frac{2}{1+|y|^2}$ and $F:\Sigma(\R^{3},g_{\R^3})\to \Sigma(\Sph,g_{\Sph})$ the isomorphism of vector bundles in \eqref{eq: relation Dg Dtildeg}.\\
In the sequel we will need the following function spaces on $\R^3$:
\begin{align*}
D^{\frac{1}{2}}(\Sigma\R^{3})&=\left\{\psi \in L^{3}(\Sigma\R^{3}):|\xi|^{\frac{1}{2}}|\widehat{\psi}|\in L^{2}(\R^{3})\right\}\; ;\\
D^{1}(\R^{3})&=\left\{u\in L^{6}(\R^{3}): |\nabla u| \in L^{2}(\R^{3})\right\}.
%\mD&=D^{1}(\R^{3})\times D^{\frac{1}{2}}(\Sigma\R^{3}).
\end{align*}
Here $\widehat{\psi}$ is the Fourier transform of $\psi$.

%%%%%%%%%%%%%%%%%%%%%%%%%%%%%%%%%%%%%%%%%%%%%%%%%%%%%%%%%%%%%%%%%%%%%%%%%%%%%%%%%%%%%%%%%%%%%%%%%%%%%%%%%%%%%%%%%%%%%%%%%%%%%%%%%%%%
%%%%%%%%%%%%%%%%%%%%%%%%%%%%%%%%%%%%%%%%%%%%%%%%%%%%%%%%%%%%%%%%%%%%%%%%%%%%%%%%%%%%%%%%%%%%%%%%%%%%%%%%%%%%%%%%%%%%%%%%%%%%%%%%%%%%
%%%%%%%%%%%%%%%%%%%%%%%%%%%%%%%%%%%%%%%%%%%%%%%%%%%%%%%%%%%%%%%%%%%%%%%%%%%%%%%%%%%%%%%%%%%%%%%%%%%%%%%%%%%%%%%%%%%%%%%%%%%%%%%%%%%%
%%%%%%%%%%%%%%%%%%%%%%%%%%%%%%%%%%%%%%%%%%%%%%%%%%%%%%%%%%%%%%%%%%%%%%%%%%%%%%%%%%%%%%%%%%%%%%%%%%%%%%%%%%%%%%%%%%%%%%%%%%%%%%%%%%%%

\section{Proof of the main result}

\noindent
Our existence result will be obtained by means of the abstract perturbation method illustrated in \cite{AB}.\\
We recall it in the following theorem and then we will show how it can be applied in our setting.

\begin{theorem}\label{thm: abstract perturbation method}(see \cite{AB})
Let $A$ be an Hilbert space and assume $J_0\in C^2(A,\R)$ satisfies the following conditions
\begin{enumerate}
\item $J_0$ has a finite-dimensional manifold $Z$ of critical points,\label{assumption 1}
\item $J_0''(z)$ is a Fredholm operator of index zero for every $z\in Z$,\label{assumption 2}
\item $T_zZ=\textnormal{ker} J_0''(z)$, for every $z\in Z$.\label{assumption 3}
\end{enumerate}
For $G\in C^2(A,\R)$, we denote by $J_{\e}=J_0-\e G$ the perturbed functional, by $V$ the orthogonal complement of $T_zZ$ in A and by $P:A\to V$ the orthogonal projection. Then, for any $z\in Z$ there exists $v(z)\in V$ such that $P(J_{\e}'(z+v(z)))=0$. \\
\noindent
Moreover, if there exists a compact set $\Omega\subset Z$ such that $J_{\e}|_{Z}$ has a critical point $z\in\Omega,$ then $z+v(z)$ is a critical point of the perturbed functional $J_{\e}$ in $A$.
\end{theorem}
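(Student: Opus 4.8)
The statement is the classical Lyapunov--Schmidt reduction, and I would prove it in two stages: first solve the auxiliary (infinite-dimensional) equation by the implicit function theorem, then recover the critical points from the finite-dimensional manifold $Z$.

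The plan for the first assertion is as follows. Identifying $J_{\e}'(w)$ with its gradient $\nabla J_{\e}(w)\in A$ via Riesz, I define for $z\in Z$, $v\in V$ and small $\e$ the map $H(z,v,\e)=P\,\nabla J_{\e}(z+v)\in V$. Since $z$ is a critical point of $J_0$ one has $\nabla J_0(z)=0$, hence $H(z,0,0)=P\,\nabla J_0(z)=0$, and $H$ is $C^1$ because $J_0,G\in C^2(A,\R)$. The crucial step is to show that the partial differential in $v$ at $(z,0,0)$, namely $D_vH(z,0,0)=P\,J_0''(z)|_V$, is an isomorphism of $V$. Here all three hypotheses enter: $J_0''(z)$ is self-adjoint (it is the second derivative of a $C^2$ functional) and, being Fredholm of index zero, has closed range; by assumption $\textnormal{ker}\,J_0''(z)=T_zZ$, so self-adjointness gives $\textnormal{Range}\,J_0''(z)=(\textnormal{ker}\,J_0''(z))^\perp=(T_zZ)^\perp=V$. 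Consequently $J_0''(z)$ maps $V$ into $V$, is injective there (as $V\cap\textnormal{ker}\,J_0''(z)=\{0\}$) and onto $V$, hence an isomorphism of $V$ by the bounded inverse theorem, and on $V$ one has $P\,J_0''(z)=J_0''(z)$. The implicit function theorem then produces, for $z$ in a chart of $Z$ and $|\e|$ small, a unique small $v=v(z)\in V$ of class $C^1$ with $v(z)\to 0$ as $\e\to 0$ and $P\,\nabla J_{\e}(z+v(z))=0$. Because $V=(T_zZ)^\perp$ and the projection $P$ vary with $z$, I would carry this out in local coordinates on $Z$ and patch the local solutions by uniqueness.

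For the second assertion I introduce the reduced functional $\Phi_{\e}(z):=J_{\e}(z+v(z))$ on $Z$; this is precisely what the statement denotes $J_{\e}|_Z$, i.e.\ $J_{\e}$ restricted to the perturbed manifold $\{z+v(z)\}$ reparametrized by $Z$. Writing $u=z+v$ in a chart $\xi\mapsto z(\xi)$, I differentiate to get $\partial_{\xi_i}\Phi_{\e}=\langle\nabla J_{\e}(u),\,\partial_{\xi_i}z+\partial_{\xi_i}v\rangle$. The auxiliary equation gives $\nabla J_{\e}(u)\perp V$, i.e.\ $\nabla J_{\e}(u)\in T_zZ$, so I expand $\nabla J_{\e}(u)=\sum_k c_k\,\partial_{\xi_k}z$. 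At a critical point $z\in\Omega$ of $\Phi_{\e}$ all $\partial_{\xi_i}\Phi_{\e}$ vanish, yielding the linear system $(B+D)c=0$, where $B_{ik}=\langle\partial_{\xi_i}z,\partial_{\xi_k}z\rangle$ is the positive definite Gram matrix of the tangent frame and $D_{ik}=\langle\partial_{\xi_k}z,\partial_{\xi_i}v\rangle$. Since $v\to 0$ with $\e$ and $v$ is $C^1$, one has $\partial_\xi v=O(\e)$, hence $D=O(\e)$; thus $B+D$ is invertible for $|\e|$ small, forcing $c=0$, that is $\nabla J_{\e}(u)=0$. Therefore $u=z+v(z)$ is a genuine critical point of $J_{\e}$ on $A$.

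The delicate point throughout is uniformity in $z$, and this is exactly where the compactness of $\Omega$ is used: it makes $B$ uniformly positive definite and the bound $D=O(\e)$ uniform, so that a single $\e_0$ renders $B+D$ invertible for every $z\in\Omega$, and likewise it lets the implicit function theorem be applied with estimates uniform over $\Omega$ using the continuity of $z\mapsto J_0''(z)$ and of the projections $P_z$. I regard the main structural obstacle as the invertibility of the transverse Hessian $J_0''(z)|_V$, which rests entirely on the nondegeneracy hypothesis $\textnormal{ker}\,J_0''(z)=T_zZ$ together with the Fredholm and self-adjointness properties that identify $\textnormal{Range}\,J_0''(z)$ with $V$; once that is secured, the remainder is the standard implicit-function and Gram-matrix bookkeeping.
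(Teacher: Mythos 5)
The paper does not prove this theorem at all: it is quoted verbatim from Ambrosetti--Badiale \cite{AB} and used as a black box, so there is no internal proof to compare against. Your argument is correct and is essentially the standard Lyapunov--Schmidt reduction underlying the cited reference -- the key invertibility of $J_0''(z)|_V$ via self-adjointness, Fredholm index zero and $\ker J_0''(z)=T_zZ$, the local implicit-function step patched by uniqueness, and the Gram-matrix argument showing $\nabla J_{\e}(z+v(z))\in T_zZ$ must vanish at a critical point of the reduced functional, with compactness of $\Omega$ supplying the uniformity in $\e$ -- so nothing further is needed.
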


\noindent
In order to apply the previous result to our situation, we introduce the following map
$$H^{1}(\Sph)\times H^{\frac{1}{2}}(\Sigma\Sph) \ni (v,\psi)\mapsto(u,\phi)=\left(f^{\frac{1}{2}}v\circ\pi^{-1},fF^{-1}(\psi\circ\pi^{-1})\right) \; ,$$
which gives a one to one correspondence between solutions to \eqref{eq: pb on S} on $\Sph$ and solutions to the equivalent system on $\R^3$
\begin{equation}\label{eq: pb on R}
\begin{cases}
-\Delta u=H|\phi|^2u\\
\\
D\psi=Hu^2\phi\quad
\end{cases}\text{on }\R^3
\end{equation}
where we set $H=K\circ \pi^{-1}$. Hence let us consider this last problem and let us denote
$$A=D^{1}(\R^{3})\times D^{\frac{1}{2}}(\Sigma\R^{3}) \; .$$
We take $w=(u,\psi)\in A$ and we set
\begin{align*}
\quad J_0(w)=\frac{1}{2}\int_{\R^3} -u\Delta u+\la D\phi,\phi\ra-|u|^2|\phi|^2 \; ,\\
G(w)=\frac{1}{2}\int_{\R^3}h|u|^2|\phi|^2 \; ,\qquad J_{\varepsilon}(w)=J_0(w)-\varepsilon G(w)
\end{align*}
with $h=k\circ \pi^{-1}$. We are going to define the manifold of critical points of $J_0$. Let $\lambda\in\R^+$, $y,\xi\in\R^3$, $a\in\Sigma\R^3$ with $|a|=1$, it is well known that the functions
$$\bar U_{\lambda,\xi}(y)=\sqrt[4]{3}\frac{\lambda^{1/2}}{(\lambda^2+|y-\xi|^2)^{1/2}}$$
are a family of positive solutions to $-\Delta u=u^5$ in $\R^3$
and the spinors
$$\bar \Phi_{\lambda,\xi,a}(x)=\frac{2\lambda}{(\lambda^2+|y-\xi|^2)^{3/2}}\left(\lambda-(y-\xi)\right)\cdot a$$
solve $D\phi=\frac{3}{2}|\phi|\phi$ in $\Sigma\R^3$.
Using this fact, and the equality $|\bar{\Phi}_{\lambda,\xi,a} |=\frac{2}{1+|y|^2}$ , one can check that the pairs
$$\left( U_{\lambda,\xi},\Phi_{\lambda,\xi,a}\right)=\left(\sqrt[4]{3}\bar U_{\lambda,\xi},\frac{\sqrt{3}}{2}\bar \Phi_{\lambda,\xi,a}\right)\in A $$
are critical points of $J_0$. Hence
\begin{equation*}
Z=\left\{W_{\lambda,\xi,a}=(U_{\lambda,\xi},\Phi_{\lambda,\xi,a})\;:\lambda\in\R^+,\;\xi\in\R^3\;\text{and } a\in\Sigma\R^3, |a|=1\right\}\subset A
\end{equation*}
is a 7-dimensional manifold of critical points of $J_0$. Let us fix any $a_0\in\Sigma\R^3$ with $|a_0|=1$, in the sequel we will use the notation $U_0=U_{1,0}$, $\Phi_0=\Phi_{1,0,a_0}$ and $W_0=(U_0,\Phi_0)$.\\
Now we will check assumption 2 in Theorem \ref{thm: abstract perturbation method}. We have
\begin{multline*}
\la J_0''(W_{\lambda,\xi,a})[w_1],w_2\ra=\int_{\R^3}-u_2\Delta u_1-u_2u_1|\Phi_{\lambda,\xi,a}|^2-2u_2U_{\lambda,\xi}\la \Phi_{\lambda,\xi,a},\phi_1\ra \\
+\int_{\mathbb{R}^3}\la D\phi_1-|U_{\lambda,\xi}|^2\phi_1,\phi_2\ra-2U_{\lambda,\xi}u_1\la\Phi_{\lambda,\xi,a},\phi_2\ra \; .
\end{multline*}
Therefore $J''_0$ is a compact perturbation of the identity, hence it is a Fredholm operator of index zero for all $W_{\lambda,\xi,a}\in Z$.\\
Now it remains to check that $T_{W_{\lambda,\xi,a}}Z=\text{ker}J''_0(W_{\lambda ,\xi, a})$ for every $\lambda\in\R^+$, $\xi\in\R^3$ and $a\in\Sigma\R^3$ with $|a|=1$. Since $J_0''$ is invariant with respect to translations and dilations it will be enough to prove $T_{W_0}Z=\text{ker}J''_0(W_0).$ We will need the following Remark.

\begin{remark}\label{rmk: rescaling}
Let $\lambda_1=\frac{3}{4}$ and $\mu_1=\frac{3}{2}$. The map $(v,\psi)\mapsto (\nu,\eta)=(\mu_1^{-\frac{1}{2}}v,\lambda_1^{-\frac{1}{2}}\psi)$ is a one to one correspondence between solution to \eqref{eq: pb on S} on $\Sph$ and the equivalent rescaled system
\begin{equation}
\begin{cases}\label{eq: pb on S rescaled}
L_{g_{\Sph}}\nu=\lambda_1|\eta|^2\nu\\
\\
D_{g_{\Sph}}\eta=\mu_1\nu^2\eta
\end{cases}\quad \text{on }\Sph
\end{equation}
which in turn it is equivalent to
\begin{equation}
\begin{cases}\label{eq: pb on R rescaled}
-\Delta u=\lambda_1|\phi|^2u\\
\\
D\phi=\mu_1u^2\phi
\end{cases}\quad \text{on }\R^{3}
\end{equation}
by means of the stereographic projection. Notice that \eqref{eq: pb on R rescaled} arises as the first variation of the functional
$$\tilde{J}_0(w)=\frac{1}{2}\int_{\R^{3}} -\lambda_1^{-1}u \Delta u+\mu_1^{-1}\la D \phi,\phi\ra-|\phi|^2|u|^2 $$
and since $(U_{\lambda,\xi},\Psi_{\lambda,\xi,a})$ are critical points of $J_0$, then
$$ \tilde{W}_{\lambda,\xi,a}=\left(\mu_1^{-\frac{1}{2}}U_{\lambda,\xi},\lambda_1^{-\frac{1}{2}}\Psi_{\lambda,\xi,a}\right)$$
are critical points of $\tilde{J}_0.$
\end{remark}

\begin{lemma} \label{lem: sol f''}
We have $T_{W_0}Z=\text{ker}J''_0(W_0).$
\end{lemma}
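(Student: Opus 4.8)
The plan is to prove the two inclusions separately, the nontrivial content lying entirely in showing that the kernel is no larger than the tangent space. The inclusion $T_{W_0}Z\subseteq\ker J_0''(W_0)$ is automatic: every $W_{\lambda,\xi,a}\in Z$ is a critical point of $J_0$, so differentiating the identity $J_0'(W_{\lambda,\xi,a})=0$ with respect to each of the seven parameters (namely $\lambda$, the three components of $\xi$, and the three directions tangent to $\{|a|=1\}$ at $a_0$) and evaluating at $(1,0,a_0)$ yields $J_0''(W_0)[\partial_\bullet W_0]=0$. Since $Z$ is a $7$--dimensional manifold these generators are independent, so $\dim T_{W_0}Z=7$ and it remains to establish $\dim\ker J_0''(W_0)\le 7$.

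First I would write out the kernel system. From the formula for $J_0''$ in the text, $w=(u_1,\phi_1)\in A$ lies in $\ker J_0''(W_0)$ exactly when
\begin{equation*}
\begin{cases}
-\Delta u_1=|\Phi_0|^2u_1+2U_0\la\Phi_0,\phi_1\ra, \\
D\phi_1=U_0^2\phi_1+2U_0\,u_1\,\Phi_0,
\end{cases}
\quad\text{on }\R^3.
\end{equation*}
The key move is to pull this back to $\Sph$ via the conformal correspondences \eqref{eq: relation LS LE} and \eqref{eq: relation DS DE} together with the rescaling of Remark \ref{rmk: rescaling}. Under this identification the ground state $W_0$ becomes a pair $(\nu_0,\eta_0)$ with $\nu_0\equiv 1$ constant and $\eta_0$ a Killing spinor of unit length satisfying $D_{g_{\Sph}}\eta_0=\tfrac32\eta_0$; hence the potentials $|\Phi_0|^2$, $U_0^2$ and the factor $U_0\Phi_0$ all become \emph{constant}, and the kernel system takes the constant-coefficient form
\begin{equation*}
\begin{cases}
-\Delta_{g_{\Sph}}\nu_1=\tfrac32\la\eta_0,\eta_1\ra, \\
D_{g_{\Sph}}\eta_1-\tfrac32\eta_1=3\,\nu_1\,\eta_0,
\end{cases}
\end{equation*}
for $(\nu_1,\eta_1)\in H^1(\Sph)\times H^{\frac12}(\Sigma\Sph)$.

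Next I would exploit that on $\Sph$ both operators have discrete spectrum: $-\Delta_{g_{\Sph}}$ has eigenvalues $\ell(\ell+2)$, $\ell\ge0$, with eigenspaces the spherical harmonics, while $D_{g_{\Sph}}$ has eigenvalues $\pm(\tfrac32+k)$, $k\ge0$, with $\eta_0$ generating the lowest eigenspace $\ker(D_{g_{\Sph}}-\tfrac32)$ of the Killing spinors. Expanding $\nu_1$ in harmonics and $\eta_1$ in eigenspinors, and using that for two Killing spinors the pointwise product $\la\eta_0,\kappa\ra$ is constant, one checks the solvability of the second equation: for the $\ell$--th harmonic component $\nu_1=Y_\ell$ the right-hand side $3Y_\ell\eta_0$ is orthogonal to $\ker(D_{g_{\Sph}}-\tfrac32)$ precisely when $\ell\ge1$, which forces $\nu_1=0$ on the $\ell=0$ block. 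For $\ell\ge1$ one inverts $D_{g_{\Sph}}-\tfrac32$ on the complement (the Clifford product $Y_\ell\eta_0$ distributing, by $SU(2)$ Clebsch--Gordan, over the two adjacent Dirac eigenspaces of eigenvalues $\ell+\tfrac32$ and $-(\ell+\tfrac12)$), substitutes the resulting $\eta_1$ into the first equation, and obtains a scalar eigenvalue condition in $\ell$. The upshot I expect is: a four-dimensional coupled family at $\ell=1$ (the dilation and translation directions), a three-dimensional purely spinorial family $\eta_1\in\ker(D_{g_{\Sph}}-\tfrac32)$ with $\la\eta_0,\eta_1\ra\equiv 0$ and $\nu_1=0$ (the three $a$--directions), and nothing else, for a total of exactly $7$. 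Matching these with the generators of the first paragraph gives $\ker J_0''(W_0)=T_{W_0}Z$.

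The technical heart, and the step I expect to be the main obstacle, is ruling out the levels $\ell\ge2$: one must compute the Clebsch--Gordan constants governing how $Y_\ell\eta_0$ splits over the Dirac eigenspaces and the coefficients produced by projecting back through $\la\eta_0,\cdot\ra$, and then verify that the resulting scalar condition $\ell(\ell+2)=\tfrac32\,c(\ell)$ is satisfied only at $\ell=1$ and fails for every $\ell\ge2$. This is precisely where the coupling between the scalar and spinorial components does the real work: the two decoupled nondegeneracy facts (the classical nondegeneracy of the scalar bubble and its spinorial counterpart from \cite{I}) only control the blocks in isolation, whereas here the off-diagonal terms tie each harmonic level to neighbouring Dirac levels, and the cancellation at high modes must be extracted from the explicit action of Clifford multiplication on the Killing spinor $\eta_0$.
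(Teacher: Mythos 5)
Your setup coincides exactly with the paper's: the same trivial inclusion $T_{W_0}Z\subseteq\ker J_0''(W_0)$, the same reduction of the question to $\dim\ker\le 7$, the same pullback to $\Sph$ via the conformal correspondence and the rescaling of Remark \ref{rmk: rescaling}, and your constant-coefficient system is precisely \eqref{eq: kerJ''} with $(V_1,\Psi_1)=(1,\Psi_1)$, $\lambda_1=\tfrac34$, $\mu_1=\tfrac32$. But from that point on your argument is a plan rather than a proof: the entire content of the bound $\dim\ker J_0''(W_0)\le 7$ is the exclusion of the harmonic levels $\ell\ge 2$, i.e.\ the computation of the Clebsch--Gordan constants $c(\ell)$ and the verification that the compatibility condition $\ell(\ell+2)=\tfrac32\,c(\ell)$ holds only at $\ell=1$ --- and this is exactly the step you defer (``the upshot I expect'', ``the step I expect to be the main obstacle''). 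As written, the lemma is not established. There is also a small but real slip in your count of the pure-spinor block: the constraint imposed by the scalar equation when $\nu_1=0$ is $\mathrm{Re}\,\la\eta_0,\eta_1\ra=0$, not $\la\eta_0,\eta_1\ra=0$; the complex condition would cut the four-real-dimensional space of $\tfrac32$-Killing spinors down to dimension two and would wrongly exclude the phase direction $\eta_1=i\eta_0$, which is tangent to the orbit $\{|a|=1\}$ and genuinely lies in the kernel. With the real-part constraint the block is three-dimensional, as your total of $7$ requires.

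The paper's own proof shows that the heavy spectral bookkeeping can be bypassed or outsourced. Setting $f_1=\la\eta,\Psi_1\ra$ and combining the Lichnerowicz formula $D^2_{g_{\Sph}}=-\Delta_{g_{\Sph}}+\tfrac32$ with the formula for the Laplacian of $\la\eta,\Psi_1\ra$ (Lemma 5.2 and Formula 5.16 in \cite{I}) yields
\begin{equation*}
-\Delta_{g_{\Sph}}f_1=6\nu+3\la\nabla\nu\cdot\Psi_1,\Psi_1\ra,
\end{equation*}
and since the last term is purely imaginary, the real part $g_1$ couples to $\nu$ through $-\Delta_{g_{\Sph}}\nu=\tfrac32 g_1$ and $-\Delta_{g_{\Sph}}g_1=6\nu$, whence $\Delta^2_{g_{\Sph}}g_1=9g_1$. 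Factoring $\Delta^2-9=(\Delta-3)(\Delta+3)$ and using that $-\Delta_{g_{\Sph}}\ge 0$ has spectrum $\{\ell(\ell+2)\}$ forces $g_1$ into the first eigenspace and $\nu=\tfrac{g_1}{2}$: the scalar component is pinned to $\ell=1$ in a few lines, with no mode-by-mode inversion of $D_{g_{\Sph}}-\tfrac32$ and no computation of $c(\ell)$. After deducing $f_1=g_1$, the system collapses to the purely spinorial problem $D_{g_{\Sph}}\eta=\tfrac32\eta+\tfrac32\la\eta,\Psi_1\ra\Psi_1$, whose solution space $\Lambda$ has dimension $7$ by \cite[Lemma 5.1]{I} with $m=3$ --- which is precisely the Clebsch--Gordan analysis you flagged as the main obstacle, already carried out by Isobe for the decoupled spinorial problem. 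So your route is viable in principle, but the decisive computation is missing; the paper closes the argument with the $f_1$-trick plus the citation to \cite{I}.
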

\begin{proof}
It is standard to check that $T_{W_0}Z\subseteq\text{ker}J''_0(W_0)$, so it suffices to prove the inclusion $\text{ker} J''_0(W_0)\subseteq T_{W_0}Z.$ Moreover, since $\textnormal{dim}(T_{W_0}Z)=7$ it is enough to show that
$$\textnormal{dim}(\text{ker}J''_0(W_0))\leq 7$$
and, by means of Remark \ref{rmk: rescaling}, this is equivalent to
$$\textnormal{dim}(\text{ker}\tilde{J}''_0(\tilde{W}_0))\leq 7.$$
On the sphere $\Sph$, the linearization of \eqref{eq: pb on R rescaled} at $\tilde{W}_0$ reads as
\begin{equation}\label{eq: kerJ''}
\begin{cases}
L_{g_{\Sph}}\nu=\lambda_1\nu|\Psi_1|^2+2\lambda_1V_1\la\Psi_1,\eta\ra\\
\\
D_{g_{\Sph}}\eta=\mu_1|V_1|^2\eta+2\mu_1\nu V_1\Psi_1
\end{cases}
\end{equation}
where $(V_1,\Psi_1)=\left(\mu_1^{-\frac{1}{2}}(f^{-\frac{1}{2}}U_{\lambda,\xi})\circ \pi, \lambda_1^{-\frac{1}{2}}(f\circ\pi)^{-1}F(\Phi_{\lambda,\xi,a}\circ\pi)\right)=(1,\Psi_1)$. Notice that $\Psi_1$ satisfies
\begin{equation}\label{eq: psi1}
D_{g_{\Sph}}\Psi_1=\frac{3}{2}|\Psi_1|\Psi_1\quad \textnormal{and}\quad |\Psi_1|=1,
\end{equation}
so it is an eigenspinor of $D_{g_{\Sph}}$ with eigenvalue $\frac{3}{2}$. We set $\eta=\sum_{k\in \mathbb{Z}} f_{k}\Psi_{k}$ where $\Psi_{k}$ is a trivialization with Killing spinors and we write $f_{1}=g_{1}+ih_{1}$, where $g_1$ and $h_1$ are real valued functions. We will first find $f_{1}$. Since $f_1=\la\eta,\Psi_{1}\ra$, we have (see Lemma 5.2 and Formula 5.16 in \cite{I})
$$\Delta_{g_{\Sph}} f_{1}=\la\Delta_{g_{\Sph}} \eta, \Psi_{1}\ra+\la \eta,\Delta_{g_{\Sph}} \Psi_{1}\ra+\la D_{g_{\Sph}}\eta,\Psi_{1}\ra.$$
Notice now that, by \eqref{eq: psi1} and the Lichnerowicz's formula on the sphere
$$D^2_{g_{\Sph}}=-\Delta_{g_{\Sph}} +\frac{3}{2} , $$
we have $-\Delta_{g_{\Sph}} \Psi_{1}=\frac{3}{4}\Psi_{1}$ and
\begin{align*}
-\Delta_{g_{\Sph}} \eta&= D^{2}_{g_{\Sph}}\eta-\frac{3}{2}\eta\\
& =D_{g_{\Sph}}\left(\frac{3}{2}\eta +3\nu\Psi_{1}\right)-\frac{3}{2}\eta=\frac{3}{2}\left(\frac{3}{2}\eta+3\nu\Phi_{1}\right)+3\nabla \nu \cdot \Psi_{1}+\frac{9}{2}\nu\Psi_{1}-\frac{3}{2}\eta\\
&=\frac{3}{4}\eta+9\nu\Psi_{1}+3\nabla \nu\cdot \Psi_{1}.
\end{align*}
Therefore
\begin{equation}\label{eq: Delta f1}
\begin{split}
-\Delta_{g_{\Sph}} f_{1}&= \frac{3}{4}f_{1}+9\nu+3\la\nabla \nu\cdot \Psi_{1},\Psi_{1}\ra+\frac{3}{4}f_{1}-\frac{3}{2}f_{1}-3\nu\\
&=6\nu+3\la\nabla \nu\cdot \Psi_{1},\Psi_{1}\ra.
\end{split}
\end{equation}
Since the last addend in the previous equality is purely imaginary, we take the real and imaginary part to have
$$-\Delta_{g_{\Sph}} g_{1}=6\nu$$
and
$$-\Delta_{g_{\Sph}} h_{1}=-3i\la\nabla \nu\cdot \Psi_{1},\Psi_{1}\ra.$$
In particular, recalling that $L_{g_{\Sph}}=-\Delta_{g_{\Sph}}+\frac{3}{4}$ and the first equation in \eqref{eq: kerJ''}, we have the system
\begin{equation}
\begin{cases}
-\Delta_{g_{\Sph}} \nu=\frac{3}{2} g_{1}\\
\\
-\Delta_{g_{\Sph}} g_{1}=6\nu
\end{cases}
\end{equation}
Hence,
$$\Delta^{2}_{g_{\Sph}} g_{1}=9g_{1}$$
from which we deduce that $g_{1}$ is the first eigenfunction of the Laplacian on the sphere and $\nu=\frac{g_{1}}{2}$. So, the first equation in \eqref{eq: kerJ''} becomes
$$L_{g_{\Sph}}\frac{g_1}{2}=\frac{3}{4}\frac{g_1}{2}+3f_1 $$
and recalling the definition of $L_{g_{\Sph}}$, from the quality above we get
$$f_1=g_1.$$ Using this fact, the system \eqref{eq: kerJ''} becomes
\begin{equation}\label{new J''=0}
\begin{cases}
\nu=\frac{g_1}{2}\\
\\
D_{g_{\Sph}}\eta=\frac{3}{2}\eta+\frac{3}{2}\la \eta,\Psi_1\ra\Psi_1.
\end{cases}
\end{equation}
Hence we need to compute the dimension of
$$ \Lambda=\left\{\eta\in H^{\frac{1}{2}}(\Sigma\Sph)\; : \;D_{g_{\Sph}}\eta=\frac{3}{2}\eta+\frac{3}{2}\la \eta,\Psi_1\ra\Psi_1  \right\}.$$
This computation has been carried out by Isobe in \cite{I} for general dimensions of the sphere $\mathbb{S}^m$, so in our situation it suffices to take $m=3$ in \cite[Lemma 5.1]{I} to get $\textnormal{dim}(\Lambda)=7$ as desired.
\end{proof}

\noindent
Now we will focus on the reduced functional. For a fixed $a\in \Sigma\R^{3}$, with $|a|=1$, we set $V_{\lambda,\xi}=|U_{\lambda,\xi}|^2 |\Phi_{\lambda,\xi,a}|^{2}$, so that $V_{\lambda,\xi}(x)=\frac{1}{\lambda^{3}}V_{1,0}(\frac{1}{\lambda}(x-\xi))$ and let
$$\Gamma(\lambda,\xi)=\frac{1}{2}\int_{\R^{3}}h(x)V_{\lambda,\xi}(x)dx,$$
for $(\lambda,\xi)\in (0,+\infty)\times \R^{3}$. Then we have the following

\begin{proposition} \label{pro: Gammaestimates}
$\Gamma$ is of class $C^2$ on $(0,+\infty)\times \R^{3}$ and it can be extended to a $C^{1}$ function at $\lambda=0$ by
$$\Gamma(0,\xi)=c_{0}h(\xi), \qquad c_0=\frac{1}{2}\int_{\R^{3}}V_{1,0}(x)dx$$
Also,
$$\lim_{\lambda\to 0}\nabla^{2}_{\xi}\Gamma(\lambda,\xi)=c_{0}\nabla^{2}h(\xi),$$
uniformly on every compact of $\R^{3}$.
Moreover, for any compact set $\Sigma\subset \R^{3}$, there exists a constant $C=C_{\Sigma}$ such that
$$|\partial_{\lambda}\Gamma(\lambda,\xi)-c_{1}\lambda \Delta h(\xi)|\leq C_{\Sigma}\lambda^{2},$$
for all $\lambda>0$ and all $\xi\in \Sigma$, being
$$c_{1}=\int_{\R^{3}}|y|^{2}V_{1,0}(y)dy \; .$$
\end{proposition}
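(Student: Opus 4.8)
The plan is to trade the concentration of the weight $V_{\lambda,\xi}$ for a slowly varying argument of $h$ via the change of variables $x=\xi+\lambda y$. Using the scaling $V_{\lambda,\xi}(x)=\lambda^{-3}V_{1,0}(\lambda^{-1}(x-\xi))$ recorded above, this produces the single identity on which everything rests,
\begin{equation*}
\Gamma(\lambda,\xi)=\frac{1}{2}\int_{\R^3}h(\xi+\lambda y)\,V_{1,0}(y)\,dy .
\end{equation*}
From the explicit bubbles one computes $V_{1,0}(y)=9(1+|y|^2)^{-3}$, so $V_{1,0}$ is radial, positive and decays like $|y|^{-6}$; in particular $|y|^{j}V_{1,0}\in L^{1}(\R^3)$ for $0\le j\le 2$, whereas the third moment only just diverges. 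Since $h=k\circ\pi^{-1}$ with $k\in C^2(\Sph)$ and the conformal factor $f=2(1+|y|^2)^{-1}$ decays at infinity, the functions $h$, $\nabla h$ and $\nabla^2 h$ are bounded on all of $\R^3$. Setting $\lambda=0$ in the identity gives $\Gamma(0,\xi)=h(\xi)\cdot\tfrac12\int_{\R^3}V_{1,0}=c_0h(\xi)$.

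I would then read off the regularity by differentiating under the integral sign, legitimate thanks to the integrability of $|y|^jV_{1,0}$ and the boundedness of the derivatives of $h$. This gives, for example,
\begin{equation*}
\nabla^2_{\xi}\Gamma(\lambda,\xi)=\frac{1}{2}\int_{\R^3}\nabla^2 h(\xi+\lambda y)\,V_{1,0}(y)\,dy ,\qquad \partial_\lambda\Gamma(\lambda,\xi)=\frac{1}{2}\int_{\R^3}\nabla h(\xi+\lambda y)\cdot y\,V_{1,0}(y)\,dy ,
\end{equation*}
together with the mixed second derivatives; continuity of all of these in $(\lambda,\xi)$ follows from dominated convergence and the continuity of $\nabla^2 h$, so $\Gamma\in C^2((0,+\infty)\times\R^3)$ and the first derivatives extend continuously up to $\lambda=0$ (with $\partial_\lambda\Gamma(0,\xi)=0$ by the radial symmetry of $V_{1,0}$), which is the asserted $C^1$ extension. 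For the Hessian limit I would write
\begin{equation*}
\nabla^2_{\xi}\Gamma(\lambda,\xi)-c_0\nabla^2 h(\xi)=\frac{1}{2}\int_{\R^3}\bigl[\nabla^2 h(\xi+\lambda y)-\nabla^2 h(\xi)\bigr]V_{1,0}(y)\,dy
\end{equation*}
and split the integral at $|y|=R$: the ball is controlled by the uniform continuity of $\nabla^2 h$ on a compact neighbourhood of the prescribed compact set of $\xi$'s, the tail by the boundedness of $\nabla^2 h$ and the smallness of $\int_{|y|>R}V_{1,0}$. Choosing $R$ large to kill the tail and then $\lambda$ small gives the limit uniformly on compacta.

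The core of the proposition is the refined expansion of $\partial_\lambda\Gamma$. Writing $\nabla h(\xi+\lambda y)=\nabla h(\xi)+\lambda\int_0^1\nabla^2 h(\xi+t\lambda y)\,y\,dt$ and inserting it, the zeroth order term drops out since $V_{1,0}$ is radial and hence $\int_{\R^3}y\,V_{1,0}=0$; freezing the Hessian at $\xi$ in the first order term and using the isotropy identity $\int_{\R^3}y_iy_j\,V_{1,0}=\tfrac13\delta_{ij}\int_{\R^3}|y|^2V_{1,0}$ yields a constant multiple of $\lambda\,\Delta h(\xi)$, namely the main term $c_1\lambda\,\Delta h(\xi)$. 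There remains the remainder
\begin{equation*}
\frac{\lambda}{2}\int_{\R^3}\int_0^1\bigl\langle\,[\nabla^2 h(\xi+t\lambda y)-\nabla^2 h(\xi)]\,y,\;y\,\bigr\rangle\,dt\;V_{1,0}(y)\,dy ,
\end{equation*}
to be bounded by $C_\Sigma\lambda^2$ uniformly for $\xi\in\Sigma$. I would split it at $\lambda|y|=1$. On the far part the two Hessians are merely bounded, and the sharp tail estimate $\int_{|y|\ge1/\lambda}|y|^2V_{1,0}\lesssim\lambda$, forced by the $|y|^{-6}$ decay, already delivers an $O(\lambda^2)$ contribution with constant uniform over the compact $\Sigma$.

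The step I expect to be the genuine obstacle is the near part $\{\lambda|y|<1\}$ of this remainder. There one must quantify the oscillation $\nabla^2 h(\xi+t\lambda y)-\nabla^2 h(\xi)$ by the modulus of continuity of $\nabla^2 h$ and weigh it against the almost-divergent density $|y|^2V_{1,0}$, whose truncated third moment $\int_{|y|<1/\lambda}|y|^3V_{1,0}$ already grows like $\log(1/\lambda)$; it is precisely this competition that makes the clean $O(\lambda^2)$ bound delicate and that forces one to exploit the quantitative regularity of $\nabla^2 h$, the two finite lower moments of $V_{1,0}$ being by themselves insufficient. Once the change of variables is in place, every other assertion of the proposition reduces to a routine dominated-convergence argument.
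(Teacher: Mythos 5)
Your proposal follows the paper's own route step for step: the change of variables $x=\xi+\lambda y$, dominated convergence for the $C^1$ extension, oddness of $\int_{\R^3} y\,V_{1,0}\,dy$ for $\partial_\lambda\Gamma(0,\xi)=0$, a splitting argument for the uniform Hessian limit, and a Taylor expansion with the isotropy identity producing the main term of $\partial_\lambda\Gamma$. Everything you actually carry out is correct, and your constant bookkeeping is in fact more careful than the paper's: keeping the factor $\tfrac12$ and using $\int_{\R^3} y_iy_j V_{1,0}\,dy=\tfrac13\delta_{ij}\int_{\R^3}|y|^2V_{1,0}\,dy$, the coefficient of $\lambda\Delta h(\xi)$ comes out as $\tfrac16\int_{\R^3}|y|^2V_{1,0}\,dy$ rather than the stated $c_1$ (harmless, since only its sign is used later). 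However, you stop exactly where the proposition requires a proof: the near-region part of the remainder on $\{\lambda|y|<1\}$ is announced as ``the genuine obstacle'' and never estimated, so as written your argument does not establish the bound $|\partial_\lambda\Gamma(\lambda,\xi)-c_1\lambda\Delta h(\xi)|\le C_\Sigma\lambda^2$; this is a gap, not a completed proof.

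That said, your diagnosis of where the difficulty sits is precise, and it should be noted that the paper's own two-line argument does not resolve it either: the paper invokes the global Taylor bound $\bigl|\nabla h(\xi+y)-\nabla h(\xi)-\nabla^2h(\xi)y\bigr|\le C_\Sigma|y|^2$, which for $k$ merely $C^2$ fails near $y=0$ (it requires a Lipschitz Hessian), and then integrates it against $|y|V_{1,0}(y)$, i.e.\ against the third moment $\int_{\R^3}|y|^3V_{1,0}\,dy$, which diverges logarithmically since $V_{1,0}(y)=9(1+|y|^2)^{-3}$ --- exactly the divergence you flag. Quantitatively, your splitting yields a near-part of order $\lambda^2\int_\lambda^1\omega(s)s^{-2}\,ds$, with $\omega$ the modulus of continuity of $\nabla^2h$ on a compact neighbourhood of $\Sigma$; this is only $o(\lambda)$ for $k\in C^2$, and still $O(\lambda^2\log(1/\lambda))$ even for a Lipschitz Hessian, so the clean $O(\lambda^2)$ is not reachable at the stated regularity by this (or the paper's) estimate. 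Two honest repairs: (a) weaken the conclusion to $\partial_\lambda\Gamma(\lambda,\xi)=c_1\lambda\Delta h(\xi)+o(\lambda)$ uniformly on $\Sigma$, which follows since $\partial_\lambda^2\Gamma(\lambda,\xi)=\tfrac12\int_{\R^3}\langle\nabla^2h(\xi+\lambda y)y,y\rangle V_{1,0}\,dy$ extends continuously to $\lambda=0$ by dominated convergence ($|y|^2V_{1,0}\in L^1$); this weaker statement is all the subsequent degree computation uses, as only the sign of $\partial_\lambda\Gamma$ for small $\lambda$ near critical points with $\Delta h(\xi)\neq0$ enters. Or (b) strengthen the hypothesis to $k\in C^{3,1}$ and expand $\nabla h$ to third order on $\{\lambda|y|\le1\}$: the cubic moment $\int y_iy_jy_k V_{1,0}$ vanishes by oddness, the fourth-order remainder contributes $\lambda^3\int_{|y|\le1/\lambda}|y|^4V_{1,0}\,dy=O(\lambda^2)$, and your tail estimate $\int_{|y|\ge1/\lambda}|y|^2V_{1,0}\,dy\lesssim\lambda$ handles the rest, recovering the stated $O(\lambda^2)$.
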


\begin{proof}
We have by a change of variable that
$$\Gamma(\lambda,\xi)=\int_{\R^{3}}h(\lambda x+\xi)V_{1,0}(x)dx$$
Using the smoothness of $h$ and the dominated convergence, we have that
$$\lim_{\lambda\to 0}\Gamma(\lambda,\xi)=c_{0}h(\xi).$$
The same reasoning applies to show that one has
$$\nabla_{\xi} \Gamma(0,\xi)=c_{0}\nabla h(\xi);\qquad
\nabla_{\xi}^{2}\Gamma(0,\xi)=c_{0}\nabla^{2}h(\xi) \quad \text{ and } \quad \nabla_{\lambda}\Gamma (0,\xi)=0.$$
The last equality follows from the oddness of the integral, that is
$$\int_{\R^{3}}x_{i}V_{1,0}(x)dx=0 \; , \quad i=1,2,3.$$
We fix now a compact set $\Sigma$, then by Taylor expansion of $y\mapsto h(y+\xi)$, we have
$$\left|\partial_{\xi_{i}}h(y+\xi)-\partial_{\xi_{i}}h(\xi)-\sum_{j=1}^{3}\partial^{2}_{\xi_{i}\xi_{j}}h(\xi)y_{j}\right|\leq C_{\Sigma}|y|^{2} \; .$$
Also, notice that since
$$\int_{\R^{3}}y_{i}y_{j}V_{1,0}(y)dy=0 \; , \text{ if } i\not=j,$$
we have for our choice of $c_1$:
$$c_{1}\lambda \Delta h(\xi)=\int_{\R^{3}}\sum_{i=1}^{3} \left(\partial_{\xi_{i}}h(\xi)+\sum_{j=1}^{3}\partial^{2}_{\xi_{i}\xi_{j}}h(\xi)\lambda y_{j}\right)y_{i} V_{1,0}(y)dy.$$
Therefore
$$|\partial_{\lambda}\Gamma(\lambda,\xi)-c_{1}\lambda \Delta h(\xi)|\leq C_{\Sigma}\lambda^{2}.$$
\end{proof}

\begin{proposition}
Let $k$ and $h$ be functions as in the main Theorem \ref{thm: main}. Then there exists an open set $\Omega \subset (0,+\infty) \times \R^{3}$ such that $\nabla \Gamma \not=0 $ on $\partial \Omega$ and
$$deg(\nabla \Gamma, \Omega, 0)=\sum_{\substack{\xi \in \textnormal{crit}[h]\\ \Delta h(\xi)<0}} (-1)^{m(h,\xi)}+1.$$
\end{proposition}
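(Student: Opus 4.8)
The plan is to read off the zeros of $\nabla\Gamma$ from the expansions in the previous Proposition and then evaluate the degree by a Poincar\'e--Hopf argument on a closed manifold obtained by doubling the half-space $\{\lambda>0\}$.

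First I would record the structural symmetry: since $V_{1,0}$ is radial, $\Gamma(\lambda,\xi)=\frac12\int h(\lambda y+\xi)V_{1,0}(y)\,dy$ is even in $\lambda$, so $\Gamma$ extends to all $(\lambda,\xi)\in\R^4$ and $\partial_\lambda\Gamma(0,\xi)=0$. Combined with $\Gamma(0,\xi)=c_0h(\xi)$, $\lim\nabla_\xi^2\Gamma=c_0\nabla^2h$, and $\partial_\lambda\Gamma=c_1\lambda\Delta h+O(\lambda^2)$ (in fact $O(\lambda^3)$, by oddness of $\partial_\lambda\Gamma$), this produces at each $(0,\xi_j)$, $\xi_j\in\textnormal{crit}[h]$, a critical point with block-diagonal Hessian $\textnormal{diag}\!\left(c_1\Delta h(\xi_j),\,c_0\nabla^2h(\xi_j)\right)$; by hypothesis $(i)$ and the Morse property these are nondegenerate, with local degree $\textnormal{sign}(\Delta h(\xi_j))\,(-1)^{m(h,\xi_j)}$. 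Moreover $\partial_\lambda\Gamma\approx c_1\lambda\Delta h\neq0$ near these points shows $\nabla\Gamma$ has no zero in a punctured slab $0<\lambda<\delta$ over a neighborhood of the $\xi_j$, while $\nabla_\xi\Gamma\approx c_0\nabla h\neq0$ away from them; hence all zeros with $\lambda\neq0$ lie in a compact subset of $\{\lambda\neq0\}$ and, by evenness, come in mirror pairs $(\pm\lambda^*,\xi^*)$ with equal local degree.

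Next I would compactify $\{\lambda\geq0\}$ to a closed ball and double it along $\{\lambda=0\}$: since $\Gamma\to c_0k(sp)$ at infinity and $\Gamma$ is even in $\lambda$, the even extension descends to a function on $\mathbb{S}^4$ whose ``fold'' is $\Sph$ with $\Gamma|_{\textnormal{fold}}=c_0k$. Because $\nabla\Gamma$ is tangent to the fold, a fold point is a zero of $\nabla\Gamma$ iff it is critical for $c_0k$; thus the fold zeros are exactly the $(0,\xi_j)$, and the point at infinity $sp$ is \emph{not} a zero, precisely because $sp$ is not critical. Poincar\'e--Hopf on $\mathbb{S}^4$ then gives
$$\underbrace{\sum_{j}\textnormal{sign}(\Delta h(\xi_j))(-1)^{m(h,\xi_j)}}_{\textnormal{fold}}+2S=\chi(\mathbb{S}^4)=2,$$
where $S:=\sum_{\text{zeros with }\lambda>0}(-1)^{\textnormal{ind}}$ is the quantity I want. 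With $P=\sum_{\Delta h>0}(-1)^{m}$ and $N=\sum_{\Delta h<0}(-1)^{m}$ the fold sum is $P-N$, and since $m(h,\xi_j)=m(k,\xi_j)$ and $\textnormal{crit}[k]=\{\xi_j\}$, the global Morse relation yields $P+N=\chi(\Sph)=0$. Hence $-2N+2S=2$, i.e.\ $S=N+1$.

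Finally I would take $\Omega=(\eta,T)\times B_\rho$ with $\eta$ small and $T,\rho$ large; the localization of zeros above guarantees $\nabla\Gamma\neq0$ on $\partial\Omega$ and that $\Omega$ contains exactly the zeros with $\lambda>0$, so $\deg(\nabla\Gamma,\Omega,0)=S=N+1$, which is the assertion. The main obstacle is the passage to the compactification: one must justify that the doubled function is regular enough at the point at infinity and that no critical point escapes there, and this is exactly where the hypothesis that the south pole is not critical enters — it both makes $\infty$ a regular point of the fold function and identifies $\textnormal{crit}[k]$ with $\{\xi_j\}$, so that $\chi(\Sph)=0$ may be invoked. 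A secondary technical point is that the interior zeros need not be nondegenerate; this is harmless, since one works throughout with local Brouwer degrees rather than Morse indices, the Poincar\'e--Hopf count being insensitive to degeneracy as long as zeros are isolated (and one may perturb $\Gamma$ slightly otherwise).
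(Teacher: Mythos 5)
Your proposal is correct, and it reaches the paper's formula by a genuinely different route in the degree-evaluation step. The localization of the zeros coincides with the paper's argument: both use the expansions of Proposition \ref{pro: Gammaestimates} (the sign of $\partial_\lambda\Gamma\approx c_1\lambda\Delta h$ near $\{0\}\times\textnormal{crit}[h]$, and $\nabla_\xi\Gamma\approx c_0\nabla h$ elsewhere on compact sets) together with the Kelvin inversion $(\lambda,\xi)\mapsto(\tilde\lambda,\tilde\xi)$, the non-criticality of the south pole making $\nabla\tilde\Gamma\neq 0$ near the origin and hence $\nabla\Gamma\neq 0$ near infinity. Where you diverge is in computing the degree itself: the paper takes $\Omega=\mathcal{B}_s$ and simply cites Gottlieb's formula as ``standard,'' whereas you exploit the evenness of $\Gamma$ in $\lambda$ (valid since $V_{1,0}$ is radial), double the compactified half-space to $\mathbb{S}^4$, and apply Poincar\'e--Hopf, $\chi(\mathbb{S}^4)=2$, combined with the Morse relation $\sum_{\xi\in\textnormal{crit}[h]}(-1)^{m(h,\xi)}=\chi(\Sph)=0$ (legitimate because $sp\notin\textnormal{crit}[k]$, so $\textnormal{crit}[h]$ exhausts $\textnormal{crit}[k]$ and Morse indices transport through the diffeomorphism $\pi$) to turn the fold contribution $P-N$ into $-2N$ and conclude $S=N+1$. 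This buys a self-contained derivation that also explains the provenance of the ``$+1$'' (half of $\chi(\mathbb{S}^4)$) and of the restriction to $\Delta h<0$ (the sign $\textnormal{sign}(\Delta h(\xi_j))(-1)^{m(h,\xi_j)}$ of the block-diagonal local model), at the cost of the compactification bookkeeping you correctly flag as the delicate point. Two minor caveats: the parenthetical claim that the error in $\partial_\lambda\Gamma$ is $O(\lambda^3)$ by oddness is unjustified for $h$ merely $C^2$ (an odd function that is $O(\lambda^2)$ need not be $O(\lambda^3)$), but you never use it; and speaking of a ``nondegenerate Hessian'' at the fold points presumes $C^2$ regularity of the even extension across $\lambda=0$, which Proposition \ref{pro: Gammaestimates} does not provide --- however, your closing remark already repairs this, since the local Brouwer degree at $(0,\xi_j)$ can be computed by homotopy to the model field $\bigl(c_1\lambda\Delta h(\xi_j),\,c_0\nabla^2h(\xi_j)(\xi-\xi_j)\bigr)$ using only the stated $C^1$ extension and asymptotics, and the mirror-pair equality of indices holds because local index is invariant under the (orientation-reversing) reflection.
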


\begin{proof}
Let $s>0$, we consider the set $$\mathcal{B}_{s}=\left\{(\lambda,\xi)\in (0,+\infty)\times \R^{3}; |(\lambda,\xi)-(s,0)|\leq s-\frac{1}{s}\right\}.$$
We will show that for $s$ large enough, we can choose $\Omega=\mathcal{B}_{s}$. First, we set
$$\textnormal{crit}[h]= \{\xi^{1}, \xi^{2},\cdots, \xi^{l}\},$$
for some $l \in \mathbb{N}$.  Since the south pole is not a critical point for $k$, we have that for $r$ large enough
$$\textnormal{crit}[h]\subset A_{r}=\left\{\xi\in \R^{3};|\xi|\leq r\right\}.$$
Since $h$ is a Morse function (as well as $k$), then by the non-degeneracy condition $(i)$, there exist constants $\mu\in(0,r)$ and $\delta>0$ such that
$$ \left|\Delta h(\xi)\right|>\delta, \quad \forall \xi \in \bigcup_{i=1}^{l}B_{\mu}\left(\xi^{i}\right),$$
where $B_{\mu}\left(\xi^{i}\right)$ denote as usual the balls of centers $\xi^i$ and radius $\mu$. By using Proposition \ref{pro: Gammaestimates}, we have that for $s$ sufficiently large and $\mu$ even smaller if necessary,
$$\partial_{\lambda} \Gamma(\lambda, \xi) \not =0, \text{ in } \partial \mathcal{B}_{s}\cap \left((0,\mu)\times \bigcup_{i=1}^{l}B_{\mu}\left(\xi^{i}\right)\right).$$
Hence,
$$\nabla \Gamma \not=0 \text{ in } \partial \mathcal{B}_{s}\cap \left((0,\mu)\times \bigcup_{i=1}^{l}B_{\mu}\left(\xi^{i}\right)\right).$$
Again, by Proposition \ref{pro: Gammaestimates}, since $\Gamma$ extends to a $C^{1}$ function at $\lambda=0$ and $\nabla_{\xi}\Gamma(0,\xi)=c_{0}\nabla h(\xi)$, we have that
$$\nabla \Gamma \not=0 \text{ in } \partial \mathcal{B}_{s} \cap \left((0,\mu)\times A_{2r}\setminus \bigcup_{i=1}^{l} B_{\mu}\left(\xi^{i}\right)\right).$$
Hence,
$$\nabla \Gamma \not=0 \text{ in } \partial \mathcal{B}_{s} \cap ((0,\mu)\times A_{2r}).$$
So it remains to study $\Gamma$ on the component of $\partial \mathcal{B}_{s}$ outside $(0,\mu)\times A_{2r}$. So we consider the Kelvin reflection
$$\tau: \R^{3}\setminus \{0\}\to \R^{3}\setminus \{0\}, \qquad \tau(x)=\frac{x}{|x|^{2}}.$$
We notice that
$$\tau^{*}(g_{\R^{3}})=\frac{1}{|x|^{4}}g_{\R^{3}}.$$
Hence, for all $F\in L^{6}(\R^{3})$, by putting $y=\tau(x)$, we have
$$\int_{\R^{3}}h(y)|F(y)|^{6}dy=\int_{\R^{3}}h(\tau(x))|F(\tau(x))|^{6}f^{3}(x)dx=\int_{\R^3}h(\tau(x))|F^{*}(x)|^{6}dx,$$
where
$$F^{*}(x)=\frac{1}{|x|^{2}}F\left(\frac{x}{|x|^{2}}\right).$$
In particular, if we set
$$\tilde{\lambda}=\frac{\lambda}{\lambda^{2}+|\xi|^{2}}, \qquad \tilde{\xi}=\frac{\xi}{\lambda^{2}+|\xi|^{2}}, $$
we have that
$$|V_{\lambda,\xi}^{*}(x)|=|V_{\tilde{\lambda},\tilde{\xi}}(x)|.$$
So we define $$\tilde{\Gamma}=\frac{1}{2}\int_{\R^{3}}h(\tau(x))|V_{\lambda,\xi}(x)|dx, $$
and we have that
$$\Gamma(\lambda, \xi)=\tilde{\Gamma}(\tilde{\lambda},\tilde{\xi}).$$
Once again, by using Proposition \ref{pro: Gammaestimates}, we have that $\tilde{\Gamma}$ can be extended to a $C^{1}$ function up to the origin $(0,0)\in [0,\infty)\times \R^{3}$. Since $(\lambda,\xi)\mapsto (\tilde{\lambda},\tilde{\xi})$ is a diffeomorphism, then $\nabla \Gamma(\lambda,\xi)=0$ if and only if $\tilde{\Gamma}(\tilde{\lambda},\tilde{\xi})=0$. But by assumption, the south pole is not a critical point of $h$, hence $0$ is not a critical point of $h(\tau(x))$. Therefore, $\nabla \tilde{\Gamma}\not=0$ in a neighborhood of the origin and so $\nabla \Gamma\not=0$ in a neighborhood of infinity. Finally, we have that for $r$ and $s$ large enough,
$$\nabla \Gamma\not=0 \text{ on }\partial \mathcal{B}_{s}\setminus \left((0,\mu)\times \overline{A}_{2r}\right).$$
The degree computation is by now standard and it follows for instance as in \cite{gottlieb}.
\end{proof}

\begin{remark}\label{rmk: degeneracy}
We want explicitly to notice that at this point we cannot directly conclude as in the classical cases (see for instance \cite{ambgarper, malchiodi-uguzzoni 2002}), since the critical points of $\Gamma$ on $Z$ are degenerate: this is due to the invariance of the functional with respect to the parameters $a$ and this degeneracy causes the degree to vanish.
\end{remark}

\noindent
We recall that $Z$ is a non-degenerate manifold of critical points of $J_{0}$ and $J''_{0}$ is Fredholm of index zero, therefore we have that there exists $\varepsilon>0$ such that for all $z\in Z_{c}\subset Z$ with $Z_{c}$ compact, there exists a unique $w(z)\in T_{z}Z^{\perp}$ such that
$$PJ'_{\varepsilon}(z+w(z))=0$$
where $P:A\to T_{z}Z^{\perp}$ is the orthogonal projection. Now, to find a solution to our problem, it is enough to find a critical point for the function $\Phi_{\varepsilon}:Z\to \R$ defined by
$$\Phi_{\varepsilon}(z)=J_{\varepsilon}(z+w(z)).$$
In order to do this, we will consider the set of the parameters $a$
$$\left\{ a\in \Sigma\R^{3} \; : \quad |a|=1 \right\}\simeq \Sph $$
as a Lie group. Hence, we will consider the natural action of $\Sph$ on $Z \simeq (0,+\infty)\times \R^{3}\times \Sph$, being $Z$ parameterized by $(\lambda, \xi, a)$. Also, we notice that $(J_{0})_{|Z}$ and $G_{|Z}$ are invariant under this action: then we need to extend the action to the whole space $D^{\frac{1}{2}}(\Sigma\R^{3})$. In order to do this, we recall that the spinor bundle of $\R^{3}$ can be trivialized by Killing spinors that are either constant (parallel spinors) or spinors of the form $x\cdot \phi$ with $\phi$ constant. So we fix an orthonormal basis of $\Sigma \R^{3}$ of the form $$\left\{a_{1},a_{2},x\cdot a_{1},x\cdot a_{2}\right\},$$
where $a_1,a_2$ are (distinct) constant spinors with $|a_1|=|a_2|=1$. Hence, if $\phi\in D^{\frac{1}{2}}(\Sigma\R^{3})$, there exist $f_{1},f_{2},g_{1},g_{2}$ such that
$$\phi(x)=(f_{1}(x)+g_{1}(x)x)\cdot a_{1}+(f_{2}(x)+g_{2}(x)x)\cdot a_{2}.$$
Since $a_{1}$ and $a_{2}$ can be seen as elements in $\Sph$, we can define the action for a general $w\in \Sph$ and $\phi \in D^{\frac{1}{2}}(\Sigma\R^{3})$ by
$$w\phi=(f_{1}(x)+g_{1}(x)x)\cdot w a_{1}+(f_{2}(x)+g_{2}(x)x)\cdot wa_{2}.$$
In this way, this last action extends the one previously defined on $Z$ and in addition both $J_{0}$ and $G$ are invariant under this action. Therefore, $\Phi_{\varepsilon}$ descends to a $C^{1}$ function $\tilde{\Phi}_{\varepsilon}$ defined on the quotient
$$Z/{\Sph}\simeq (0,\infty)\times \R^{3}.$$
The same argument works for $\Gamma$; therefore for $\varepsilon$ small enough, we have that
$$\tilde{\Phi}'_{\varepsilon}=\varepsilon \Gamma +o(\varepsilon).$$
At this point, from the invariance of the degree by homotopy, we have that
$$\deg(\tilde{\Phi}'_{\varepsilon},\mathcal{B}_{s},0)=\sum_{\substack{\xi \in \textnormal{crit}[h]\\ \Delta h(\xi)<0}} (-1)^{m(h,\xi)}+1.$$
Finally, by assumption $(ii)$, by contradiction (for the argument, see for instance \cite{AB, ambgarper}) if
$$\sum_{\substack{\xi \in \textnormal{crit}[h]\\ \Delta h(\xi)<0}} (-1)^{m(h,\xi)} \neq -1,$$
then $\tilde{\Phi}_{\varepsilon}$ has a critical point that can be lifted as a critical orbit of $\Phi_{\varepsilon}$, which in turn ends the proof of the main Theorem \ref{thm: main}.

\end{document}